\newtheorem{theorem}{Theorem}[section]
\newtheorem{lemma}[theorem]{Lemma}
\theoremstyle{example}
\theoremstyle{definition}
\newtheorem{corollary}[theorem]{Corollary}
\newtheorem{ob}[theorem]{Notation}
\begin{document}

\thispagestyle{empty}

\title[]{A Hausdorff compact space is metrizable if and only if it is a continuous open image of the Sorgenfrey line}
\author{{Vlad Smolin}}
\address{Vlad Smolin
\newline\hphantom{iii} Krasovskii Institute of Mathematics and Mechanics,
\newline\hphantom{iii} Sofia Kovalevskaya street, 16,
\newline\hphantom{iii} 620990, Ekaterinburg, Russia}
\email{SVRusl@yandex.ru}

\maketitle{\small
\begin{quote}
\noindent{\sc Abstract. } In this note we prove that a regular continuous open image of the Sorgenfrey line with an uncountable weight has a closed subspace that is homeomorphic to the Sorgenfrey line. As a corollary we deduce the theorem in the title.
 \end{quote}
}

\section{Introduction}

A continuous map is called open if the image of an open set under this map is open.

In \cite{Sm} the author asked if there is a Hausdorff nonmetrizable compact space that is a continuous open image of the Sorgenfrey line? In this paper we give a negative answer to this question by proving the theorem in the title.

\section{Notation and terminology}

We use terminology from \cite{2}.

The symbol := means ``equals by definition''; the symbol $:\longleftrightarrow$ is used to show that the expression on the left side is an abbreviation for the expression on the right side.

\begin{ob} {\rm Let $\langle X, \tau \rangle$ be a topological space, $x \in X$, $B \subseteq X$, and $A \subseteq \mathbb{R}$. Then
    \begin{itemize}
        \item $\omega := $ the set of finite ordinals = the set of natural numbers;
        \item $f{\upharpoonright} A :=$ the restriction of function $f$ to $A$;
        \item $\mathbb{S} :=$ the Sorgenfrey line $:= \langle \mathbb{R}, \tau_{\mathbb{S}} \rangle$, where $\tau_{\mathbb{S}}$ is the topology generated by $\{[a, b): a,b \in \mathbb{R}\}$;
        \item $A_{\mathbb{S}} :=$ the set $A$ as a subspace of $\mathbb{S}$;
        \item $A_{\mathbb{R}} :=$ the set $A$ as a subspace of $\langle \mathbb{R}, \tau_{\mathbb{R}} \rangle$, where $\tau_{\mathbb{R}}$ is the natural topology on the real line;
        \item if $p \in {}^{\omega} X$, then $p \xrightarrow{\langle X, \tau \rangle} x :\longleftrightarrow p$ converges to $x$ in $\langle X, \tau \rangle$;
        \item $\mathsf{nbhds}(x, \tau) := \{U \in \tau: x \in U\}$;
        \item $\tau {\upharpoonright} B := \{U \cap B: U \in \tau\} = $ the subspace topology of $B$;
        \item $\mathsf{Cl}_{\langle X, \tau \rangle}(B) := $ the closure of $B$ in $\langle X, \tau \rangle$;
        \item if $\langle Y, \sigma \rangle$ is a topological space, then $\langle X, \tau \rangle \cong \langle Y, \sigma \rangle :\longleftrightarrow \langle X, \tau \rangle$ is homeomorphic to $\langle Y, \sigma \rangle$;
        \item $\mathsf{w}(\langle X, \tau\rangle)$ := the weight of $\langle X, \tau\rangle$.
    \end{itemize}
}\end{ob}

\section{Results}

\begin{lemma}
    Let $\langle X, \tau \rangle$ be a $T_1$ topological space, $f:\mathbb{S}\rightarrow \langle X, \tau \rangle$ a continuous open surjection, and $A$ at most countable subset of $X$. If for any $x \in X \setminus A$ the set $f^{-1}(x)$ contains a nontrivial convergent sequence, then $\mathsf{w}(\langle X, \tau\rangle) = \omega$.
\end{lemma}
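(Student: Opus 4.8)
The plan is to exhibit an explicit countable base for $\langle X, \tau\rangle$, built from the images of rational half-open intervals together with finitely-much-per-point data over the exceptional set $A$. Set
\[
  \mathcal{B}_0 := \{\, f([q,r)) : q,r \in \mathbb{Q},\ q < r \,\},
\]
a countable family of open subsets of $X$ (each $[q,r)$ is $\tau_{\mathbb{S}}$-open and $f$ is open). Before invoking the hypothesis, I would record a soft consequence of openness: for any $x \in X$ and any $s \in f^{-1}(x)$, the sets $f([s, s+\tfrac1k))$, $k \in \omega$, form a neighbourhood base at $x$. Indeed, if $U$ is open with $x \in U$, then $f^{-1}(U)$ is $\tau_{\mathbb{S}}$-open and contains $s$, so $[s, s+\tfrac1k) \subseteq f^{-1}(U)$ for some $k$, whence $f([s,s+\tfrac1k)) \subseteq U$; and each $f([s,s+\tfrac1k))$ is an open neighbourhood of $x$. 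In particular $X$ is first countable, and for each of the countably many $x \in A$ I can fix some $s_x \in f^{-1}(x)$ and adjoin the countable family $\{\, f([s_x, s_x+\tfrac1k)) : k \in \omega \,\}$ to my prospective base.

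The heart of the argument is that $\mathcal{B}_0$ already serves as a base at every point $x \in X \setminus A$, and this is where the convergent-sequence hypothesis enters. Let $U$ be open with $x \in U$. By hypothesis $f^{-1}(x)$ contains a nontrivial convergent sequence, and the decisive observation is that convergence in $\mathbb{S}$ is one-sided: a nontrivial sequence can only converge from the right. Thus (after passing to a subsequence) there is $s \in \mathbb{S}$ and fibre points $s_n > s$ with $s_n \to s$ and $f(s_n) = x$ for all $n$; moreover $f(s) = x$, since $f$ is continuous and $\langle X, \tau\rangle$ is $T_1$ (the constant sequence $(x)_n$ converges only to its value in a $T_1$ space). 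Now $f^{-1}(U)$ is $\tau_{\mathbb{S}}$-open and contains $s$, so $[s, s+\delta) \subseteq f^{-1}(U)$ for some $\delta > 0$. Choose $n$ with $s < s_n < s+\delta$ and pick rationals $q,r$ with $s < q < s_n < r < s+\delta$, using density of $\mathbb{Q}$. Then $[q,r) \subseteq [s, s+\delta) \subseteq f^{-1}(U)$ and $s_n \in [q,r)$, so
\[
  x = f(s_n) \in f([q,r)) \subseteq U,
\]
and $f([q,r)) \in \mathcal{B}_0$. Hence $\mathcal{B}_0$ is a base at $x$.

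Putting the pieces together, the family
\[
  \mathcal{B} := \mathcal{B}_0 \cup \{\, f([s_x, s_x + \tfrac1k)) : x \in A,\ k \in \omega \,\}
\]
is countable, and for every $x \in X$ and open $U \ni x$ it contains some $B$ with $x \in B \subseteq U$: this was verified for $x \notin A$ through $\mathcal{B}_0$, and for $x \in A$ it follows from the neighbourhood-base property of $\{\, f([s_x, s_x+\tfrac1k))\,\}$. Thus $\mathcal{B}$ is a countable base and $\mathsf{w}(\langle X, \tau\rangle) = \omega$. I expect the delicate point to be the placement of the rational left endpoint: the Sorgenfrey pathology is exactly that an interval starting just below $s$ may leave $f^{-1}(U)$ on the left, so it is essential that the hypothesis supplies a fibre point $s_n$ strictly to the right of $s$, allowing a rational interval $[q,r)$ that sits entirely inside the right-neighbourhood $[s, s+\delta)$ while still capturing a point of $f^{-1}(x)$; the role of $T_1$ is precisely to guarantee $f(s)=x$, which is what makes the whole interval $[s,s+\delta)$ available.
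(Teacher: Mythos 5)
Your proof is correct and takes essentially the same route as the paper: both arguments show that the countable family of images of rational intervals (the paper uses $f[(a,b)]$, you use $f([q,r))$) is a neighbourhood base at every $x \in X \setminus A$, exploiting the one-sided convergence in $\mathbb{S}$ together with $T_1$ to conclude $f(s)=x$ and then fitting a rational interval between the limit $s$ and a fibre point $s_n$ strictly to its right. The paper compresses the finish into the remark that $\langle X,\tau\rangle$ is first countable; your explicit neighbourhood bases $f([s_x, s_x+\tfrac1k))$ at the countably many points of $A$ are precisely what that remark leaves implicit.
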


\begin{proof}
    We prove that $\{f[(a,b)]:a<b \in \mathbb{Q}\}$ is a neighbourhood base at $x$ for all $x \in X \setminus A$. Let $x \in X \setminus A$, $U \in \mathsf{nbhds}(x, \tau)$, $\langle y_n \rangle_{n \in \omega}$ a nontrivial convergent sequence in $\mathbb{S}$ such that $\{y_n : n \in \omega\} \subseteq f^{-1}(x)$, and $y$ its limit point. There exists $c>y$ such that $[y,c) \subseteq f^{-1}[U]$. Since $\langle y_n \rangle_{n \in \omega} \xrightarrow{\mathbb{S}} y$, we see that there exists $n \in \omega$ such that $y_n \in (y, c)$. Take $a<b \in \mathbb{Q}$ such that $y_n \in (a,b) \subseteq (y, c)$, then $x = f(y_n) \in f[(a, b)] \subseteq U$.

    Since $\langle X, \tau \rangle$ is a first-countable space, the Lemma is proved.
\end{proof}

\begin{corollary} \label{Bdiscr}
    Let $\langle X, \tau \rangle$ be a $T_1$ topological space and $f:\mathbb{S}\rightarrow \langle X, \tau \rangle$ a continuous open surjection. If $\mathsf{w}(\langle X, \tau \rangle) > \omega$, then there exists an uncountable set $B \subseteq X$, such that $f^{-1}(x)$ is a closed discrete set for all $x \in B$.
\end{corollary}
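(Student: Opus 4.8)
The plan is to prove the contrapositive-style consequence of the Lemma. The Lemma says that if outside some countable set $A$ every fiber $f^{-1}(x)$ contains a nontrivial convergent sequence, then the weight is countable. Since we are assuming $\mathsf{w}(\langle X, \tau \rangle) > \omega$, that hypothesis must fail: for every at most countable $A$ there exists some $x \in X \setminus A$ whose fiber $f^{-1}(x)$ contains \emph{no} nontrivial convergent sequence. So the first step is to extract from this failure an uncountable supply of such exceptional points, and the natural object to collect is
\[
B := \{x \in X : f^{-1}(x) \text{ contains no nontrivial convergent sequence in } \mathbb{S}\}.
\]

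Next I would argue that $B$ is uncountable. Suppose toward a contradiction that $B$ is at most countable. Then taking $A := B$ in the Lemma, every $x \in X \setminus A = X \setminus B$ has a fiber containing a nontrivial convergent sequence (by the very definition of $B$), so the Lemma applies and yields $\mathsf{w}(\langle X, \tau \rangle) = \omega$, contradicting the hypothesis $\mathsf{w}(\langle X, \tau \rangle) > \omega$. Hence $B$ is uncountable. This is the cleanest route and it directly reuses the Lemma as a black box.

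The remaining step is to check that for each $x \in B$ the fiber $f^{-1}(x)$ is a closed discrete subset of $\mathbb{S}$. Closedness is immediate: since $\langle X, \tau \rangle$ is $T_1$, the singleton $\{x\}$ is closed, and by continuity of $f$ the preimage $f^{-1}(x)$ is closed in $\mathbb{S}$. For discreteness I would use the characterization of convergence in the Sorgenfrey line. If $f^{-1}(x)$ were not discrete, it would have an accumulation point $p$ in the subspace topology; in the Sorgenfrey line a point $p$ accumulates to a set only from the right, so one could then extract from $f^{-1}(x)$ a sequence decreasing to $p$, which converges to $p$ in $\mathbb{S}$ — a nontrivial convergent sequence inside the fiber, contradicting $x \in B$. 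Thus $f^{-1}(x)$ has no accumulation points and is discrete.

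The main obstacle is the discreteness verification, specifically making precise the passage from ``$p$ is an accumulation point of $f^{-1}(x)$ in $\mathbb{S}$'' to ``there is a nontrivial sequence in $f^{-1}(x)$ converging to $p$ in $\mathbb{S}$.'' The subtlety is that Sorgenfrey neighborhoods $[p, p+\varepsilon)$ of $p$ only see points to the right of $p$, so I must confirm that any Sorgenfrey accumulation point $p$ has infinitely many fiber points in each $[p, p + 1/n)$ and that a suitable choice among them forms a strictly decreasing sequence tending to $p$; the $T_1$ and first-countability facts used in the Lemma make this extraction routine once set up correctly.
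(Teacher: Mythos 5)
Your proposal is correct, and it is exactly the intended derivation: the paper states this Corollary without proof as an immediate consequence of the Lemma, relying on precisely your two observations — that a countable exceptional set would trigger the Lemma and contradict $\mathsf{w}(\langle X, \tau \rangle) > \omega$, and that in $\mathbb{S}$ a fiber (closed by $T_1$ and continuity) fails to be closed discrete exactly when some point accumulates from the right, yielding a strictly decreasing, hence nontrivial, convergent sequence inside the fiber. Your extraction of that sequence via recursively chosen points in $(p, \min(y_{n-1}, p + 1/n))$ is sound, so there is nothing to add.
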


\begin{lemma} \label{SembF}
    Let $F$ be an uncountable closed subset of the Sorgenfrey line. Then there exists a closed subset of $F$ that is homeomorphic to the Sorgenfrey line.
\end{lemma}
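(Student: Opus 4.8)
The plan is to locate, inside $F$, a subset that is order-isomorphic to the half-open interval $[0,1)$ (equivalently to a half-line), since a monotone bijection between subsets of $\mathbb{R}$ is automatically a homeomorphism for the induced Sorgenfrey topologies, and $[0,1)$ with the Sorgenfrey topology is itself homeomorphic to $\mathbb{S}$. The first thing I would record is the characterization of Sorgenfrey-closed sets: $V\subseteq\mathbb{R}$ is closed in $\mathbb{S}$ if and only if it contains each of its right-hand limit points, i.e.\ every $x$ with $(x,x+\varepsilon)\cap V\neq\emptyset$ for all $\varepsilon>0$. With this in mind I would pass from $F$ to its set of right-condensation points $E:=\{x\in F:\ (x,x+\varepsilon)\cap F\ \text{is uncountable for every }\varepsilon>0\}$ and aim to show that $E$ already contains the desired copy of $\mathbb{S}$.

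The technical heart, and the step I expect to be the main obstacle, is to prove that $F\setminus E$ is countable, so that $E$ is uncountable. This is a Lindel\"of-type argument: for $x\in F\setminus E$ choose a rational $q_x>x$ with $(x,q_x)\cap F$ countable, and for a fixed rational $q$ examine $S_q:=\{x\in F:\ x<q,\ (x,q)\cap F\ \text{countable}\}$. Every element of $S_q$ has only countably many elements of $S_q$ above it; if $S_q$ were uncountable, taking a strictly decreasing sequence down to $\inf S_q$ would exhibit $S_q$ as a countable union of countable sets, a contradiction. Hence each $S_q$ is countable and $F\setminus E\subseteq\bigcup_{q\in\mathbb{Q}}S_q$ is countable. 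From this I would read off that $E$ is uncountable, that $E$ is closed in $\mathbb{S}$ (a right-limit point of $E$ is a right-limit point of $F$, hence lies in $F$, and inherits right-condensation from nearby points of $E$), and that $E$ has no right-isolated point (an $\varepsilon$-right-gap at $x\in E$ would strand uncountably many points of $F$, hence of $E$).

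Next I would extract the order-theoretic content. Having no right-isolated point forces $E$ to be densely ordered, since two consecutive points would make the lower one right-isolated. Moreover, and pleasantly, closedness in $\mathbb{S}$ alone makes $E$ Dedekind complete in itself, i.e.\ every nonempty $S\subseteq E$ with an upper bound in $E$ has a least upper bound in $E$: the infimum of the upper bounds, if not already in $E$, would be a right-limit point of $E$ and would therefore belong to $E$ after all. Fixing any $c\in E$ and passing to the clopen piece $E^{+}:=E\cap[c,\infty)$, I retain dense order, Dedekind completeness and a least element $c$, and I gain the absence of a greatest element: a maximum $M$ of $E^{+}$ would be a right-condensation point of $F$, producing uncountably many points of $E$ above $M$. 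Since subsets of $\mathbb{R}$ are automatically order-separable, $E^{+}$ becomes a densely ordered, order-separable linear order with a least but no greatest element that is Dedekind complete in itself.

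Finally I would invoke the order characterization of $[0,1)$: such an order is order-isomorphic to $[0,1)$ (a Cantor back-and-forth isomorphism on countable order-dense subsets, extended by suprema using Dedekind completeness). The resulting monotone bijection $E^{+}\to[0,1)$ is a homeomorphism of the Sorgenfrey subspace topologies, and $[0,1)$ with the Sorgenfrey topology is homeomorphic to $\mathbb{S}$: cutting $[0,1)$ into the clopen half-open intervals $[1-2^{-n},1-2^{-n-1})$ realizes $[0,1)_{\mathbb{S}}$ as a countable disjoint sum of clopen copies of itself, exactly as $\mathbb{S}=\bigsqcup_{n\in\mathbb{Z}}[n,n+1)$ is. Thus $E^{+}\subseteq F$ is closed in $\mathbb{S}$ and homeomorphic to $\mathbb{S}$. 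The only steps demanding real care are the counting argument for $F\setminus E$ and the observation that closedness in $\mathbb{S}$ yields Dedekind completeness; the remainder is bookkeeping with monotone maps and disjoint sums.
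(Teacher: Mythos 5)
Your route is genuinely different from the paper's. The paper's proof is two lines: hereditary Lindel\"ofness of $\mathbb{S}$ yields an uncountable closed $C\subseteq F$ without isolated points, and then the homeomorphism $\langle C,\tau_{\mathbb{S}}{\upharpoonright}C\rangle\cong\mathbb{S}$ is quoted from Burke and Moore \cite{8}. You instead reprove the needed instance of that cited theorem by hand: your passage to the right-condensation points $E$, the counting argument showing $F\setminus E$ is countable, the closedness of $E$ and absence of right-isolated points, and in particular the observation that Sorgenfrey-closedness forces Dedekind completeness (via the infimum of the set of upper bounds, which is approached from the right) are all correct, and the last of these is a genuinely nice point. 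What your approach buys is self-containedness; what it costs is length and one dangerous shortcut, discussed next.

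The shortcut is the opening claim that ``a monotone bijection between subsets of $\mathbb{R}$ is automatically a homeomorphism for the induced Sorgenfrey topologies.'' This is false in general: take $A=\{0\}\cup\{1/n : n\geq 1\}$ and $B=\{-1\}\cup\{1/n : n\geq 1\}$, with the increasing bijection sending $0\mapsto -1$ and fixing each $1/n$. Then $-1$ is isolated in $B_{\mathbb{S}}$ (indeed $B_{\mathbb{S}}$ is discrete), while $0$ is not isolated in $A_{\mathbb{S}}$; the point is that being right-isolated depends on the embedding in $\mathbb{R}$, not on the abstract order. Fortunately your argument is locally repairable from facts you already established. For any $A\subseteq\mathbb{R}$, every up-set of $A$ is Sorgenfrey-open (if $x\in U$ then $[x,\infty)\cap A\subseteq U$), every down-set without a maximum is Sorgenfrey-open, and each basic set $[a,b)\cap A$ is the intersection of an up-set with the down-set $(-\infty,b)\cap A$, whose maximum, if it existed, would be a right-isolated point of $A$. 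Hence if neither $A$ nor $B$ has a right-isolated point, the Sorgenfrey subspace topologies are generated purely by order data, and every increasing bijection $A\to B$ is a homeomorphism. You proved that $E^{+}$ has no right-isolated points, and $[0,1)$ obviously has none, so your specific order isomorphism $E^{+}\to[0,1)$ is indeed a homeomorphism. With that one correction the proof is complete.
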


\begin{proof}
    Since the Sorgenfrey line is hereditary Lindel\"{o}f, we see that there exists an uncountable closed set $C \subseteq F$ such that $C$ has no isolated points. Then from \cite[Theorem 4.3]{8} it follows that $\langle C, \tau_{\mathbb{S}} \upharpoonright C\rangle \cong \mathbb{S}$.
\end{proof}

In the proof of the following theorem we use the ideas from the proof of \cite[Theorem 3.7]{Arkh2}.

\begin{theorem}
    Let $\langle X, \tau \rangle$ be a $T_1$ regular topological space and $f:\mathbb{S}\rightarrow \langle X, \tau \rangle$ a continuous open surjection. If $\mathsf{w}(\langle X, \tau \rangle) > \omega$, then there exists a closed subset of $\langle X, \tau \rangle$ that is homeomorphic to the Sorgenfrey line.
\end{theorem}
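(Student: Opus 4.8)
The plan is to locate the desired copy of $\mathbb{S}$ as the $f$-image of a carefully chosen uncountable subset of the domain on which $f$ behaves like a closed embedding. Two preliminary observations drive everything. First, since $f$ is a continuous open surjection and $\mathbb{S}$ is first countable, $\langle X,\tau\rangle$ is first countable as well: the $f$-images of a countable neighbourhood base at a point of a fibre form a neighbourhood base at its image. Hence throughout I may test continuity and closure with sequences. Second, Corollary \ref{Bdiscr} hands me an uncountable set $B\subseteq X$ for which every fibre $f^{-1}(x)$, $x\in B$, is closed discrete in $\mathbb{S}$, and therefore countable, as $\mathbb{S}$ is hereditarily Lindel\"of.

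My aim is to produce an uncountable set $S\subseteq\mathbb{S}$, closed in $\mathbb{S}$, such that $f{\upharpoonright}S$ is injective and sends closed subsets of $S$ to closed subsets of $X$. A continuous closed injection is a homeomorphism onto a closed subspace, so this gives $f[S]\cong S$ with $f[S]$ closed in $X$. Because $S$ is then an uncountable closed subset of $\mathbb{S}$, Lemma \ref{SembF} produces a closed $C\subseteq S$ with $C\cong\mathbb{S}$; and since $C$ is closed in $S$ while $f{\upharpoonright}S$ is a closed embedding, $f[C]$ is a closed subset of $X$ homeomorphic to $\mathbb{S}$, which is exactly the conclusion.

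To build $S$ I would select, by recursion on $\alpha<\omega_1$, a representative $s_\alpha\in f^{-1}(x_\alpha)$ for pairwise distinct points $x_\alpha\in B$, together with a witnessing basic neighbourhood $[s_\alpha,r_\alpha)$ (with $r_\alpha$ rational), chosen by closed-discreteness of the fibre to meet $f^{-1}(x_\alpha)$ only in $s_\alpha$. Openness of $f$ makes each $f[[s_\alpha,r_\alpha)]$ an open neighbourhood of $x_\alpha$, and regularity of $\langle X,\tau\rangle$ lets me shrink these to neighbourhoods whose closures separate $x_\alpha$ from the previously chosen images. As at every stage $\alpha<\omega_1$ only countably many choices have been made, these countably many separation and avoidance requirements can always be met. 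I would then thin the family to an uncountable subfamily on which the data are coherent (for instance, by the pigeonhole principle fixing the rational $r_\alpha$ and the relative order type of the $s_\alpha$), which makes the verification of the embedding properties tractable.

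The genuinely hard step is the final verification that $f[S]$ is \emph{closed} in $X$; this is where the argument must be most delicate, since a restriction of an open map need not be closed and, in $\mathbb{S}$, distinct fibres interleave freely along the real line. Concretely, were there a point $y\in\overline{f[S]}\setminus f[S]$, first countability would yield selected images $x_{\alpha_n}\to y$, and pulling the neighbourhoods of $y$ back through a point of $f^{-1}(y)$ would produce fibre points accumulating to that point in $\mathbb{S}$. The entire purpose of the recursive choice of the $s_\alpha$ and of the regularity-based separation of their images is to render this accumulation impossible unless $y$ is one of the $x_\alpha$; arranging the book-keeping so that such a limit is always realised inside $f[S]$ is the crux, and it is precisely here that I expect to lean, as the authors indicate, on the technique of \cite[Theorem 3.7]{Arkh2}.
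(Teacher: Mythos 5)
Your reductions at the top are sound (first countability of $X$, Corollary \ref{Bdiscr}, the closed-embedding strategy feeding into Lemma \ref{SembF}), but the proof has a genuine, and in fact self-acknowledged, gap at its crux: you never establish that the recursively constructed $S$ is closed in $\mathbb{S}$, nor that $f[S]$ is closed in $X$, and the recursion as described cannot deliver either. At stage $\alpha$ you can only impose countably many separation requirements against the points already chosen, whereas closedness of $f[S]$ is a condition about limits of arbitrary (uncountable) subfamilies, including accumulation at points selected \emph{later} or never selected at all. Worse, since $X$ is hereditarily Lindel\"of (as a continuous image of $\mathbb{S}$), all but countably many points of the uncountable set $f[S]$ are condensation points of it, so accumulation is unavoidable and cannot be legislated away stage by stage; regularity-based shrinking of neighbourhoods at each step gives no control over where these condensation points land. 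Demanding $f[S]$ closed in $X$ outright is also aiming for more than the paper itself ever proves directly.

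The paper avoids transfinite recursion entirely by using a canonical injectivity set: with $f_{a,b} := f{\upharpoonright}[a,b)$ for rationals $a<b$, it takes $P_{a,b} := \{x \in [a,b) : f^{-1}_{a,b}(f_{a,b}(x)) = \{x\}\}$, shows $P_{a,b}$ is closed in $\mathbb{S}$, and shows $f{\upharpoonright}P_{a,b}$ is a homeomorphism (a bijective restriction of the open map $f_{a,b}$ to a full preimage). Your observation that each $x \in B$ has a fibre point isolated inside a rational interval is exactly how the paper proves some $P_{a,b}$ is uncountable, so that ingredient of your plan is correct --- but attached to the canonical set rather than to recursive choices. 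The decisive step you are missing is then twofold: the paper proves only that $f[P_{a,b}]$ is closed \emph{in the open set} $f[[a,b)]$, and converts this relative closedness into genuine closedness via perfect normality --- $X$ is regular and hereditarily Lindel\"of, hence perfectly normal, so $f[[a,b)] = \bigcup_{i\in\omega} F_i$ with each $F_i$ closed in $X$; some $F_n \cap f[P_{a,b}]$ is uncountable, closed in $X$, and homeomorphic to an uncountable closed subset of $\mathbb{S}$, whereupon Lemma \ref{SembF} finishes. Note that this $F_\sigma$ decomposition is the only place regularity is used; it plays no role in separating points as in your sketch. Your closing appeal to the technique of \cite{Arkh2} gestures in the right direction, but without the set $P_{a,b}$ and the $F_\sigma$ trick the argument does not close.
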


\begin{proof}
    Suppose that $\mathsf{w}(\langle X, \tau \rangle) > \omega$. Denote $f \upharpoonright [a,b)$ by $f_{a,b}$ and $\{x \in [a,b) : f^{-1}_{a, b}(f_{a,b}(x)) = \{x\}\}$ by $P_{a, b}$ for all $a<b \in \mathbb{Q}$.

    We prove that
    \begin{equation}\label{Pabclos}
        P_{a, b} \text{ is a closed subset of } \mathbb{S} \text{ for all } a<b \in \mathbb{Q}.
    \end{equation}

    Let $a,b\in \mathbb{Q}$ be such that $a<b$. Since $[a,b)$ is a closed subset of $\mathbb{S}$, we only need to prove that $P_{a,b}$ is a closed subset of $[a,b)_{\mathbb{S}}$. Let $x \in [a,b)$ be such that $x \in \mathsf{Cl}_{\mathbb{S}}(P_{a,b})$. Suppose that $x \not \in P_{a, b}$; then there exists $y \in [a,b)\setminus\{x\}$ such that $f_{a,b}(x) = f_{a,b}(y)$. Fix $U_x\in \mathsf{nbhds}(x, \tau_\mathbb{S} \upharpoonright [a, b))$ and $U_y\in \mathsf{nbhds}(y, \tau_\mathbb{S} \upharpoonright [a, b))$ such that $f[U_x] = f[U_y]$ and $U_x \cap U_y = \emptyset$. Take any point $z \in P_{a,b} \cap U_x$. Since $z\in f[U_x] = f[U_y]$, there exists $z\sp{\prime}\in U_y$ such that $z\sp{\prime} \not= z$ and $f_{a,b}(z) = f_{a,b}(z\sp{\prime})$, it contradicts the fact that $z\in P_{a,b}$.

    Now we prove that
    \begin{equation}\label{fPabhom}
        f \upharpoonright P_{a, b} \text{ is a homeomorphism for all } a<b \in \mathbb{Q}.
    \end{equation}

    Let $a,b\in \mathbb{Q}$ be such that $a<b$. It is easy to see that $f_{a,b}$ is an open map. Since $f^{-1}_{a, b}[f_{a,b}[P_{a, b}]] = P_{a, b}$, we see that $f \upharpoonright P_{a, b} = f_{a,b} \upharpoonright P_{a, b}$ is a bijection and a restriction of the open map to the preimage, and so it is a homeomorphism.

    Let us prove that
    \begin{equation}\label{Pabuncount}
        \text{there exist } a, b \in \mathbb{Q} \text{ such that } a<b \text{ and } |P_{a,b}| > \omega.
    \end{equation}

    Since $\mathsf{w}(\langle X, \tau \rangle) > \omega$, from Corollary \ref{Bdiscr} it follows that there exists an uncountable subset $B \subseteq X$ such that $f^{-1}(x)$ is a closed discrete set for all $x \in B$. It is enough to prove that for any $x \in B$ there exist $a,b\in Q$ such that $a<b$ and $f^{-1}(x) \cap P_{a,b} \not= \emptyset$. Let $x \in B$, since $f^{-1}(x)$ is a closed discrete set, we see that $<$ is a well ordering of this set. Take any isolated point $y \in f^{-1}(x)_\mathbb{R}$. There exist $a,b \in \mathbb{Q}$ such that $a<y<b$ and $[a,b)\cap f^{-1}(x) = \{y\}$, consequently $y \in P_{a,b}$.

    Now we prove that
    \begin{equation}\label{fPabclos}
        f[P_{a,b}] \text{ is a closed subset of } f[[a,b)] \text{ for all } a<b \in \mathbb{Q}.
    \end{equation}

    Let $a,b\in \mathbb{Q}$ be such that $a<b$. Let $x \in f[[a,b)]$ be such that $x \in \mathsf{Cl}_{\langle X, \tau \rangle}(f[P_{a,b}])$. Suppose that $x \not \in f[P_{a,b}]$. Then there exist $y, z \in [a,b)$ such that $y \not= z$ and $f_{a,b}(y) = f_{a,b}(z) = x$. Fix $U_z\in \mathsf{nbhds}(z, \tau_\mathbb{S} \upharpoonright [a, b))$ and $U_y\in \mathsf{nbhds}(y, \tau_\mathbb{S} \upharpoonright [a, b))$ such that $f[U_z] = f[U_y]$ and $U_z \cap U_y = \emptyset$. Since $f$ is an open map, it follows that $f[U_y]\cap f[P_{a,b}] \not = \emptyset$. Take any point $c \in f[U_y]\cap f[P_{a,b}]$. There exist $c_1 \in U_y$ and $c_2 \in U_z$ such that $f_{a,b}(c_1) = f_{a,b}(c_2) = c$, it contradicts the fact that $c \in f[P_{a,b}]$.

     Since $\mathbb{S}$ is a hereditary Lindel\"{o}f space, it follows that $\langle X, \tau \rangle$ is a hereditary Lindel\"{o}f space. Also since $\langle X, \tau \rangle$ is a regular space, it follows that this space is perfectly normal. Fix $a,b \in \mathbb{Q}$ such that $a<b$ and $|P_{a,b}| > \omega$. Take $U := f[[a,b)]$. Since $f$ is an open map, we see that $U \in \tau$. Since $\langle X, \tau \rangle$ is a perfectly normal space, it follows that $U = \bigcup_{i \in \omega}F_i$, where $F_i$ is closed for all $i \in \omega$. From (\ref{fPabhom}) it follows that $f[P_{a,b}]$ is uncountable, and so there exists $n \in \omega$ such that
    \begin{equation}\label{FncfPabunc}
        |F_n \cap f[P_{a,b}]| > \omega.
    \end{equation}

    From (\ref{fPabclos}) it follows that
    \begin{equation}\label{FncfPabclos}
        F_n \cap f[P_{a,b}] \text{ is a closed subset of } \langle X, \tau \rangle.
    \end{equation}

    From (\ref{Pabclos}) and (\ref{fPabhom}) it follows that
    \begin{equation}\label{FncfPabhomS}
        F_n \cap f[P_{a,b}] \text{ is homeomorphic to a closed subset of } \mathbb{S}.
    \end{equation}

    From Lemma \ref{SembF}, (\ref{FncfPabunc}) and (\ref{FncfPabhomS}) it follows that there exists a closed subset of $F_n \cap f[P_{a,b}]$ that is homeomorphic to the Sorgenfrey line. From (\ref{FncfPabclos}) it follows that this set is closed in $\langle X, \tau \rangle$.
\end{proof}

\begin{corollary}
    If a Hausdorff compact space is a continuous open image of the Sorgenrey line, then this space is metrizable.
\end{corollary}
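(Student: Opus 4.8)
The plan is to argue by contradiction, reducing metrizability of a compact Hausdorff space to a statement about its weight, and then invoking the Theorem to produce a forbidden subspace. Concretely, let $\langle X, \tau \rangle$ be a Hausdorff compact space and let $f : \mathbb{S} \to \langle X, \tau \rangle$ be a continuous open surjection. I would first record the standard structural facts that make the Theorem applicable and that link metrizability to weight: a compact Hausdorff space is normal, hence regular and $T_1$, so the hypotheses ``$T_1$ regular'' of the Theorem are automatically satisfied by $\langle X, \tau \rangle$; and, by the classical metrization theorem for compacta, a compact Hausdorff space is metrizable if and only if it is second countable, i.e.\ if and only if $\mathsf{w}(\langle X, \tau \rangle) = \omega$.

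With these facts in hand, suppose toward a contradiction that $\langle X, \tau \rangle$ is \emph{not} metrizable. By the equivalence just noted, this forces $\mathsf{w}(\langle X, \tau \rangle) > \omega$. Now I would apply the Theorem directly: since $\langle X, \tau \rangle$ is $T_1$ and regular, $f$ is a continuous open surjection from $\mathbb{S}$, and the weight is uncountable, the Theorem yields a closed subset $Y \subseteq X$ with $\langle Y, \tau \upharpoonright Y \rangle \cong \mathbb{S}$.

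The contradiction then comes from compactness. A closed subset of a compact space is compact, so $Y$ is compact in the subspace topology, and hence the Sorgenfrey line $\mathbb{S}$ would be compact. But $\mathbb{S}$ is not compact: for instance the open cover $\{[n, n+1) : n \in \mathbb{Z}\}$ admits no finite subcover, so $\mathbb{S}$ is not even countably compact. This contradiction shows that the assumption $\mathsf{w}(\langle X, \tau \rangle) > \omega$ is untenable, whence $\mathsf{w}(\langle X, \tau \rangle) = \omega$ and $\langle X, \tau \rangle$ is metrizable.

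I do not expect a genuine obstacle here, since the Theorem does all the heavy lifting; the only points requiring care are bookkeeping ones. The first is confirming that compact Hausdorff implies the ``$T_1$ regular'' hypothesis, so that the Theorem genuinely applies rather than merely being invoked. The second, and the crux of the reduction, is the metrization fact that for compact Hausdorff spaces metrizability coincides with countable weight; this is what converts the conclusion ``$X$ is not metrizable'' into the usable hypothesis ``$\mathsf{w}(X) > \omega$''. Everything after that is the immediate clash between the compactness inherited by the closed copy of $\mathbb{S}$ and the manifest non-compactness of $\mathbb{S}$ itself.
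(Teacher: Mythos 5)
Your proof is correct and is exactly the argument the paper leaves implicit: the corollary is stated without proof precisely because it follows from the Theorem in the way you describe (compact Hausdorff gives $T_1$ and regular; non-metrizable compact Hausdorff gives uncountable weight; the Theorem then yields a closed, hence compact, copy of $\mathbb{S}$, contradicting the non-compactness of the Sorgenfrey line). All the bookkeeping steps you flag --- the classical equivalence of metrizability and countable weight for compacta, and the cover $\{[n,n+1): n \in \mathbb{Z}\}$ witnessing non-compactness --- are standard and correctly used.
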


\begin{theorem} \label{crit_comp_metr}
    A Hausdorff compact space is metrizable if and only if it is a continuous open image of the Sorgenfrey line.
\end{theorem}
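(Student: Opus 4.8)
The statement is an equivalence, and one of its implications is already in hand: the preceding Corollary asserts exactly that a Hausdorff compact continuous open image of $\mathbb{S}$ is metrizable (this rests on the Theorem above together with the non-compactness of $\mathbb{S}$ and Urysohn's metrization theorem, since a closed subspace homeomorphic to $\mathbb{S}$ cannot sit inside a compact space). So the plan is to concentrate entirely on the converse implication: every metrizable Hausdorff compact space $K$ is a continuous open image of $\mathbb{S}$.

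For that direction I would first record two reductions. First, $\mathbb{S}$ is the topological sum of its clopen pieces $[n,n+1)_{\mathbb{S}}$, $n \in \omega$, each a translate of $[0,1)_{\mathbb{S}}$ and hence homeomorphic to it; since continuity and openness of a map on a topological sum are local on the clopen summands, and surjectivity need only come from one of them, it suffices to construct a single continuous open surjection $h \colon [0,1)_{\mathbb{S}} \to K$ and then let $\mathbb{S} \to K$ act by $h$ on every piece. Second, I would note the elementary fact that restricting an open continuous map to the full preimage of a subspace is again open and continuous onto that subspace; this is convenient if one prefers first to land in a universal object, for instance the Hilbert cube $[0,1]^{\omega}$ into which $K$ embeds as a closed set, though the sum reduction already lets me aim directly at $K$ with domain $[0,1)_{\mathbb{S}}$.

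The heart of the argument, and the step I expect to be the main obstacle, is the construction of $h$ itself, where continuity and openness pull in opposite directions. Continuity from $\mathbb{S}$ means that $h\big([x,x+\delta)\big)$ must shrink to $\{h(x)\}$ as $\delta \to 0$, while openness means that for every fixed $\delta$ the set $h\big([x,x+\delta)\big)$ must be a neighbourhood of $h(x)$ in $K$. Because the basic neighbourhoods of $\mathbb{S}$ are one-sided, this forces $h$ to oscillate immediately to the right of every point, reaching a full neighbourhood of $h(x)$ inside each right-interval; this is exactly the phenomenon that fails for monotone or tent-type maps and that makes the usual line easy but $\mathbb{S}$ subtle. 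I would realize $h$ by a self-similar construction: fix a compatible metric on $K$ and a refining sequence of finite closed covers, the $n$-th by sets of diameter $<2^{-n}$, subdivide $[0,1)$ dyadically, and attach to each dyadic interval of generation $n$ a cover element so that (i) along any decreasing chain of dyadic intervals the attached sets converge to a single point of $K$, which defines $h$ as that limit; (ii) the sets attached to the children of a dyadic interval together cover a neighbourhood in $K$ of the value carried by their left endpoint, which will force openness; and (iii) the diameters tend to $0$, which gives continuity.

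Verifying that a coherent labelling satisfying (i)--(iii) exists is the technical core. The delicate case is openness at a non-dyadic $x$: a right-interval $[x,x+\delta)$ contains dyadic subintervals whose images are small sets clustered near $h(x)$, and one must guarantee that their union actually engulfs a neighbourhood of $h(x)$ rather than merely accumulating at it, which is precisely where the required right-oscillation must be engineered into the choice of cover elements. Once $h$ is constructed, the sum reduction promotes it to a continuous open surjection $\mathbb{S} \to K$, which together with the preceding Corollary yields the equivalence. For orientation I would first carry out the case $K=[0,1]$, where $h$ can be taken to be an explicit self-affine, nowhere-monotone, right-oscillating surjection, and then treat general compact metrizable $K$ by the covering scheme above; alternatively, if this converse is already recorded in the literature, the whole theorem collapses to combining that fact with the Corollary.
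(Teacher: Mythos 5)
Your final ``alternatively'' sentence is, in fact, the paper's entire proof: the paper disposes of the converse direction with a one-line citation, combining the preceding Corollary (compact Hausdorff open images of $\mathbb{S}$ are metrizable) with Patrakeev's Corollary 3.8 from \emph{Metrizable images of the Sorgenfrey line}, which supplies exactly the statement that every compact metrizable space is a continuous open image of $\mathbb{S}$. So read as ``Corollary plus known converse from the literature,'' your proposal is correct and coincides with the paper.

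The main body of your proposal, however, which undertakes to construct $h \colon [0,1)_{\mathbb{S}} \to K$ from scratch, has a genuine gap precisely where you flag it, and conditions (i)--(iii) as stated do not close it. Condition (ii) makes the children of each dyadic interval cover a neighbourhood of the value at the parent's left endpoint, but at a non-dyadic $x$ the interval $[x, x+\delta)$ contains only dyadic intervals $J_k$ of generations $n_k \to \infty$: by (iii) their images cover neighbourhoods of radii $r_k \to 0$ around points $h(x_k) \to h(x)$, and a union of shrinking balls centred at points merely converging to $h(x)$ need not contain any neighbourhood of $h(x)$ --- it can accumulate at $h(x)$ without engulfing it, which is the failure you name but do not repair. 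The repair has to be quantitative: one must arrange that the image of every generation-$n$ dyadic interval contains a ball about its left-endpoint value of radius at least $C \cdot 2^{-n}$, where $C$ strictly dominates the constant $c$ in the modulus of continuity $d(h(x_k), h(x)) \le c \cdot 2^{-n_k}$ built into (iii); then, for instance with $r_k \ge 4\, d(h(x_k), h(x))$, one checks $B(h(x), 3\, d(h(x_k), h(x))) \subseteq B(h(x_k), r_k)$, so the union over $k$ together with the point $h(x)$ itself contains a full ball around $h(x)$. Note that this requirement is in tension with (iii) as you wrote it (a single set of diameter $< 2^{-n}$ cannot contain a $C \cdot 2^{-n}$-ball for $C \ge 1$), so the labelling must assign to each interval's children a whole finite family sweeping out the large ball, using total boundedness of $K$; one must also impose a coherence condition forcing the leftmost child's label to contain the parent's left-endpoint value, so that the limit in (i) exists and $h$ is well defined. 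None of this engineering appears in the proposal, so as a self-contained argument it is a plausible program rather than a proof; the honest one-line route through Patrakeev's Corollary 3.8, which you offer only as a fallback, is what the paper actually does.
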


\begin{proof}
    The theorem follows from the previous Corollary and \cite[Corollary 3.8]{3}.
\end{proof}

{\bf Acknowledgement} The author would like to thank Vova Ivchenko for the help with translation of this paper from Russian to English.The work was performed as part of research conducted in the Ural Mathematical Center.

\bibliographystyle{model1a-num-names}
\bibliography{<your-bib-database>}

\bigskip

\end{document}